\documentclass[11pt]{amsart}

\usepackage{amsmath,amssymb,latexsym,soul,cite}
\usepackage{xcolor,enumitem,graphicx}
\usepackage[colorlinks=true,urlcolor=blue,
citecolor=red,linkcolor=blue,linktocpage,pdfpagelabels,
bookmarksnumbered,bookmarksopen]{hyperref}
\usepackage[english]{babel}
\usepackage[T1]{fontenc}
\usepackage[utf8]{inputenc}
\usepackage{cancel}

\usepackage[left=2.8cm,right=2.8cm,top=2.9cm,bottom=2.9cm]{geometry}

\numberwithin{equation}{section}

\newtheorem{thm}{Theorem}[section]
\newtheorem*{thms}{Theorem}
  \theoremstyle{plain}
  
  \theoremstyle{plain}
  \newtheorem{prop}[thm]{Proposition}
  \theoremstyle{plain}
  
  \theoremstyle{plain}
  \newtheorem{definition}[thm]{Definition}
    \theoremstyle{definition}
\newtheorem{rem}[thm]{Remark}

\newcommand{\R}{{\mathbb R}}
\newcommand{\N}{{\mathbb N}}

\title[On Hopf's Lemma for the fractional laplacian]{On Hopf's Lemma for sign-changing supersolutions to fractional laplacian equations}

\author[A. DelaTorre]{Azahara DelaTorre}
\author[E. Parini]{Enea Parini}

\address[A. DelaTorre]{Dipartimento di Matematica Guido Castelnuovo. 
	Facoltà Scienze matematiche, fisiche e naturali.
	Sapienza Università di Roma. 
	Piazzale Aldo Moro, 5, 00185 Roma RM.} \email{azahara.delatorrepedraza@uniroma1.it}

\address[E. Parini]{Aix Marseille Univ, CNRS, I2M, 3 place Victor Hugo, 13331 Marseille CEDEX 03, France}
\email{enea.parini@univ-amu.fr}

\subjclass[2010]{}
\keywords{}

\thanks{}

\begin{document}

\begin{abstract}
In this paper we investigate the validity of Hopf's Lemma for a (possibly sign-changing) function $u \in H^s_0(\Omega)$ satisfying
\[ (-\Delta)^s u(x) \geq c(x)u(x) \quad \text{in }\Omega,\]
where $\Omega \subset \R^N$ ($N\geq 1$) is an open set, $c \in L^\infty(\Omega)$ and $(-\Delta)^s u$ is the fractional Laplacian of $u$. We show that, under suitable assumptions, the validity of Hopf's Lemma for $u$ at a point $x_0 \in \partial \Omega$ is essentially equivalent to the validity of Hopf's Lemma for the Caffarelli-Silvestre extension of $u$ at the point $(x_0,0) \in \R^N \times \R^+$. We also provide a slightly more precise characterization of a dichotomy result stated in a recent paper by Dipierro, Soave and Valdinoci \cite{dipierrosoavevaldinoci}.
\end{abstract}

\maketitle

\section{Introduction}

The classical Hopf's Lemma, proven independently by Eberhard Hopf \cite{hopf} and Olga Oleinik \cite{oleinik}, is one of the best known results in the theory of elliptic PDEs. Several variants and extension of the original result have been proven throughout the years. Here we will mention the following version.

\begin{thms}[Hopf's Lemma] Let $\Omega \subset \R^N$ be a bounded, open domain, let $c \in L^\infty(\Omega)$, and let $u \in H^1_0(\Omega)$ solve
	\[ \left\{\begin{array}{r c l l} -\Delta u & \geq & c(x)u & \text{in }\Omega, \\ u & = & 0 & \text{on }\partial \Omega.\end{array} \right.
	\]
Let $x_0 \in \partial \Omega$, and suppose that $\Omega$ satisfies an interior ball condition at $x_0$, that is, there exists a ball $B\subset \Omega$ centered at $y_0 \in \Omega$ such that $x_0 \in \partial B$. Suppose that $u$ is continuous at $x_0$, and that there exists an open neighbourhood $V$ of $x_0$ such that $u(x) > 0$ for every $x \in \Omega \cap V$. Then,
\[ \liminf_{\stackrel{x \to x_0}{x \in C_\beta(x_0)}} \frac{u(x)}{|x-x_0|} > 0,\]
where $\beta \in \left(0, \frac{\pi}{2}\right)$, and
	\[ C_\beta(x_0) := \left\{x \in \Omega \,\bigg|\,\arccos\left(\frac{x-x_0}{|x-x_0|}\cdot \frac{y_0-x_0}{|y_0-x_0|}\right) < \beta\right\}.\]
\end{thms}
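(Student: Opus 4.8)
The plan is to run the classical barrier argument, with two adjustments dictated by the hypotheses: absorbing the sign-changing zeroth-order term into a coercive operator by exploiting the positivity of $u$ near $x_0$, and extracting a \emph{uniform} lower bound for the incremental ratio over the whole cone $C_\beta(x_0)$ rather than along a single direction. First I would localize and normalize. Since $B:=B_R(y_0)\subset\Omega$ and $V$ is a neighbourhood of $x_0$, after replacing $B$ by a smaller interior ball internally tangent to $\partial\Omega$ at $x_0$ I may assume $\overline{B}\setminus\{x_0\}\subset\Omega\cap V$, so that $u>0$ on $B$ and $u(x_0)=0$ (the latter from continuity at $x_0$ together with $u=0$ on $\partial\Omega$). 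Setting $\lambda:=\|c\|_{L^\infty(\Omega)}\ge 0$ and using $c(x)\ge-\lambda$ and $u>0$, the inequality $-\Delta u\ge c(x)u$ upgrades on $B$ to $(-\Delta+\lambda)u\ge 0$, i.e. $u$ is a supersolution of the coercive operator $-\Delta+\lambda$, for which the weak comparison principle holds on any subdomain.

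Next I would introduce the classical barrier on the annulus $A:=B_R(y_0)\setminus\overline{B_{R/2}(y_0)}$, namely $v(x):=e^{-\alpha|x-y_0|^2}-e^{-\alpha R^2}$. A direct computation gives $-\Delta v+\lambda v\le\bigl(2N\alpha-4\alpha^2|x-y_0|^2+\lambda\bigr)e^{-\alpha|x-y_0|^2}$, which is $\le 0$ on $A$ once $\alpha$ is chosen large enough in terms of $N$, $R$, $\lambda$; thus $v$ is a subsolution of $-\Delta+\lambda$. By the weak Harnack inequality (De Giorgi--Nash--Moser), the nonnegative supersolution $u$ satisfies $\operatorname{ess\,inf}_{\partial B_{R/2}(y_0)}u\ge m>0$. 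Choosing $\varepsilon>0$ with $\varepsilon\bigl(e^{-\alpha R^2/4}-e^{-\alpha R^2}\bigr)\le m$ makes $\varepsilon v\le u$ on $\partial B_{R/2}(y_0)$, while $\varepsilon v=0\le u$ on $\partial B_R(y_0)$. The comparison principle for $-\Delta+\lambda$ then yields $u\ge\varepsilon v$ throughout $A$.

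Finally I would read off the conclusion from the smoothness of $v$. Since $v(x_0)=0$ and $\nabla v(x_0)=2\alpha(y_0-x_0)e^{-\alpha R^2}$ points along the inward normal $\nu:=(y_0-x_0)/R$, writing $x=x_0+t\omega$ with $\omega$ a unit vector gives $v(x)/|x-x_0|\to\nabla v(x_0)\cdot\omega=2\alpha R\,e^{-\alpha R^2}(\nu\cdot\omega)$ as $t\to 0^+$. For $x\in C_\beta(x_0)$ one has $\nu\cdot\omega\ge\cos\beta>0$, and the convergence is uniform in $\omega$ by Taylor's theorem, so $u(x)/|x-x_0|\ge\varepsilon\, v(x)/|x-x_0|$ has $\liminf$ at least $2\alpha\varepsilon R\,e^{-\alpha R^2}\cos\beta>0$. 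I expect the main obstacle to be the interplay between regularity and the two sign issues: one must localize carefully so that the positivity of $u$ is genuinely available to neutralize the sign-changing $c$, and one must pass from the almost-everywhere inequality $u\ge\varepsilon v$ to a pointwise $\liminf$, which is where lower semicontinuity of the chosen representative of the supersolution (together with continuity at $x_0$) enters. The uniformity over the cone, by contrast, is immediate from the smoothness of the barrier once $\nu\cdot\omega\ge\cos\beta$ is secured.
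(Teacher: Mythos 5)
Your proof is correct and takes essentially the same route the paper points to for this statement: the paper offers no proof of its own but refers to \cite[Lemma 3.4]{gilbargtrudinger}, describing the strategy as ``suitable barrier functions in a subset of the ball provided by the interior ball condition, and applying the weak maximum principle,'' which is exactly your exponential barrier $e^{-\alpha|x-y_0|^2}-e^{-\alpha R^2}$ on the annulus plus comparison. Your adjustments (absorbing the sign-changing $c$ into the coercive operator $-\Delta+\lambda$ using the local positivity of $u$, invoking the weak Harnack inequality on the inner sphere, and passing to the lower semicontinuous representative to turn the a.e.\ bound $u\ge \varepsilon v$ into the pointwise cone-uniform liminf) are the standard and correct way to carry out that strategy in the weak $H^1$ setting of the statement.
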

A proof can be found, for instance, in \cite[Lemma 3.4]{gilbargtrudinger}. The strategy consists in finding suitable barrier functions in a subset of the ball provided by the interior ball condition, and applying the weak maximum principle.

\medskip

It is natural to wonder whether Hopf's Lemma holds true also for integrodifferential operators such as the fractional Laplacian. It turns out that, due to the nonlocal effects, the situation is much more involved than in the case of standard second-order, local differential operators. The first result in this direction was obtained by Greco and Servadei \cite{grecoservadei}, who were able to prove the validity of Hopf's Lemma for lower semicontinous supersolutions of
	\[ \left\{\begin{array}{r c l l} (-\Delta)^s u & = & c(x)u & \text{in }\Omega, \\ u & = & 0 & \text{in }\R^N \setminus \Omega.\end{array} \right.
\]
under the global sign assumption $u \geq 0$ in $\Omega$. As for sign-changing supersolutions, the validity of Hopf's Lemma was first proven by Fall and Jarohs \cite[Proposition 3.3]{falljarohs} assuming that they are antisymmetric with respect to a hyperplane. 

\medskip

More recently, a striking result by Dipierro, Soave and Valdinoci \cite{dipierrosoavevaldinoci} showed that Hopf's Lemma \emph{does not hold true}, in general, for the fractional Laplacian. Indeed, the authors were able to exhibit a counterexample, and they showed the following dichotomy:
\begin{itemize}
	\item Either $u$ ``grows faster than the power $2s$'', and Hopf's Lemma holds true;
	\item or $u$ ``does not grow faster than the power $2s$'', and Hopf's Lemma does not hold true.
\end{itemize}
The precise meaning of the growth condition will be given in Section \ref{secUu}.

\medskip

Using a different approach from that in \cite{dipierrosoavevaldinoci}, the authors of \cite{delatorreparini} provided a sufficient condition for the validity of Hopf's Lemma for sign-changing solutions at a point $x_0 \in \partial \Omega$, which is in particular satisfied when the so-called \emph{Caffarelli-Silvestre extension} (or \emph{$s$-harmonic extension}), which is the solution to a \emph{local} problem, satisfies Hopf's Lemma at $(x_0,0) \in \R^N \times \R^+$. 

\medskip

Let us now turn to the content of this paper. After introducing notation and recalling some known results in Section \ref{sec:pre}, in Section \ref{secuU} we first revisit the idea used in \cite[Proposition 4.5]{delatorreparini} to obtain the validity of Hopf's Lemma for (possibly sign-changing) supersolutions to the equation
	\begin{equation} \label{eq:supersolutionintroduction}
	\left\{\begin{array}{r c l l}
		(-\Delta)^s u &=& c(x)u& \text{in }\Omega, \\ u &= & 0 & \text{in }\R^N \setminus\Omega,\end{array}\right.
\end{equation}
where $c \in L^\infty(\Omega)$, assuming that Hopf's Lemma is satisfied by their $s$-harmonic extension. This relation was also highlighted in \cite[
Theorem 5.2]{fallweth-eig}, by means of different techniques. Observe that \cite[Proposition 4.5]{delatorreparini} deals only with the case when $\Omega$ is a ball, but the proof can be easily adapted, under suitable assumptions, to the case of a general domain. We also deal with the opposite implication; more precisely, we prove that when a supersolution to \eqref{eq:supersolutionintroduction} satisfies Hopf's Lemma at a point $x_0\in\partial\Omega$, then its $s$-harmonic extension, which is defined in $\R^N \times \R^+$, satisfies Hopf's Lemma at $(x_0,0)$. To the best of our knowledge, this implication had not been previously addressed. Finally, in Section \ref{secUu}, as an application of our results we show that the growth condition mentioned in \cite{dipierrosoavevaldinoci}, which is a sufficient condition for the validity of Hopf's Lemma, can be slightly weakened.

\section{Definitions and preliminary results}\label{sec:pre}

Let $N\geq 1$. For a function $u : \R^N \to \R$, we will denote $u^+$ and $u^-$ the functions defined in $\R^N$ by
\[ u^+(x) := \max\{u(x),0\}, \qquad u^-(x) := \max\{-u(x),0\},\]
so that $u = u^+ - u^-$.

For $s \in (0,1)$, the fractional Sobolev space $H^s(\R^N)$ is defined as
\[ H^s(\R^N) := \left\{ u \in L^2(\R^N) \,\bigg|\, \int_{\R^N} \int_{\R^N} \frac{|u(x)-u(y)|^2}{|x-y|^{N+2s}}\,dx\,dy < +\infty\right\}.\]
The quantity
\[ [u]_{H^s(\R^N)} := \left( \int_{\R^N} \int_{\R^N} \frac{|u(x)-u(y)|^2}{|x-y|^{N+2s}}\,dx\,dy \right)^\frac{1}{2}\]
is the \emph{Gagliardo seminorm} of the function $u$.

Let $\Omega \subset \R^N$ be an open set. The space $H^s_0(\Omega)$ is defined as the closure of $C^\infty_c(\Omega)$ with respect to the Gagliardo seminorm. If the boundary of $\Omega$ is sufficiently smooth (for instance, Lipschitz-continuous), it holds
\[ H^s_0(\Omega) = \{ u \in H^s(\R^N)\,|\, u = 0 \text{ quasi-everywhere in }\R^N \setminus \Omega\}. \]

The \emph{fractional Laplacian} $(-\Delta)^s u$ of a function $u \in C^2_{loc}(\R^N)$ is defined as
\[ (-\Delta)^s u(x) := C_{N,s} \lim_{\varepsilon \to 0^+} \int_{\R^N \setminus B_\varepsilon(x)} \frac{u(x)-u(y)}{|x-y|^{N+2s}}\,dy,\]
where $C_{N,s}$ is a normalizing constant defined as \[C_{N,s}= \pi^{\frac{N}{2}}2^{2s} \frac{\Gamma\left(\frac{N+2s}{2}\right)}{\Gamma(2-s)} s(1-s) > 0.\]

Let $f \in L^2(\Omega)$ and $g \in H^s(\R^N)$. We say that $u \in H^s(\R^N)$ is a \emph{weak solution} to the boundary value problem
	\begin{equation}
	\left\{\begin{array}{r c l l}
		(-\Delta)^s u &=& f & \text{in }\Omega, \\ u &= & g & \text{in }\R^N \setminus\Omega,\end{array}\right.
\end{equation}
if $u-g \in H^s_0(\Omega)$, and
\[ \int_{\R^N} \int_{\R^N} \frac{(u(x)-u(y))(\varphi(x)-\varphi(y))}{|x-y|^{N+2s}}\,dx\,dy = \int_\Omega f(x)\varphi(x)\,dx\]
for every $\varphi \in H^s_0(\Omega)$. We say that $u \in H^s(\R^N)$ is a \emph{supersolution} to the equation
\begin{equation}
	\left\{\begin{array}{r c l l}
		(-\Delta)^s u & = & f & \text{in }\Omega, \\ u &= & g & \text{in }\R^N \setminus\Omega,\end{array}\right.
\end{equation}
if $u-g \in H^s_0(\Omega)$, and
\[ \int_{\R^N} \int_{\R^N} \frac{(u(x)-u(y))(\varphi(x)-\varphi(y))}{|x-y|^{N+2s}}\,dx\,dy \geq \int_\Omega f(x)\varphi(x)\,dx\]
for every $\varphi \in H^s_0(\Omega)$ such that $\varphi \geq 0$ in $\Omega$.

The \emph{first eigenvalue} of the fractional Laplacian in $\Omega$ under homogeneous boundary conditions is the quantity $\lambda_1(\Omega)$ defined as
\[ \lambda_1(\Omega) := \inf_{u \in H^s_0(\Omega) \setminus \{0\}} \frac{[u]_{H^s(\R^N)}^2}{\|u\|_{L^2(\Omega)}^2}.\]
Using the standard method of Calculus of Variations, it is easy to prove that $\lambda_1(\Omega) > 0$ for every open set $\Omega \subset \R^N$ of finite measure. Moreover, by a simple scaling argument,
\[ \lambda_1(t \Omega) = \frac{\lambda_1(\Omega)}{t^{2s}} \qquad \text{for every }t > 0,\]
where
\[ t \Omega := \left \{ x \in \R^N \,\bigg|\, \frac{x}{t} \in \Omega\right\}.\]

Let $u \in H^s(\R^N)$. The \emph{Caffarelli-Silvestre extension} of $u$ (see \cite{caffarellisilvestre}) is a function $U: \R^{N+1}_+ \to \R$ which solves 
\begin{equation} \label{eq:caffarellisilvestre}
	\left\{\begin{array}{r c l l}
		\text{div}(t^{1-2s} \nabla U) &=& 0& \text{in }\R^{N+1}_+, \\ U(x,0) &= & u(x) & \text{in }\R^N.\end{array}\right.
\end{equation}
Here $\R^{N+1}_+ := \R^N \times \R^+$. Therefore, $U$ is a solution to a \emph{local}, degenerate PDE, which satisfies
\[U \in L^2_{loc}(\R^{N+1}_+; t^{1-2s}) \qquad \text{and} \qquad \nabla U \in L^2(\R^{N+1}_+; t^{1-2s}),\]
and is smooth in the interior of $\R^{N+1}_+$ by standard elliptic regularity (see \cite[Section 3.1]{cabresire}). Moreover, the unique solution to \eqref{eq:caffarellisilvestre} satisfies the representation formula  (see \cite[Section 2.4]{caffarellisilvestre})
\begin{equation}\label{eq:representation}
U(x,t)=P_{N,s}\int_{\R^n}\frac{u(y)t^{2s}}{(|x-y|^2+t^2)^\frac{N+2s}{2}}\,dy,\end{equation}
where $P_{N,s}=\pi^{-\frac{N}{2}}\frac{\Gamma\left(\frac{N+2s}{2}\right)}{\Gamma\left(s\right)}$ is a positive constant depending only on $N$ and $s$. 
\medskip

%
%
%
%

\section{Hopf's Lemma}\label{secuU}
In this section we are going to highlight the following fact: under suitable assumptions, a (possibly sign-changing) supersolution $u$ to the problem
	\begin{equation}
	\left\{\begin{array}{r c l l}
		(-\Delta)^s u &=& c(x)u& \text{in }\Omega, \\ u &= & 0 & \text{in }\R^N \setminus\Omega,\end{array}\right.
\end{equation}
where $c \in L^\infty(\Omega)$, satisfies Hopf's Lemma at a point $x_0 \in \partial \Omega$ if and only if the Caffarelli-Silvestre extension of $u$, defined as the solution $U$ to \eqref{eq:caffarellisilvestre}, satisfies Hopf's Lemma in the sense that 
\begin{equation}\label{eq:HopfU} \liminf_{t \to 0^+} \frac{U(x_0,t)-U(x_0,0)}{t^{2s}} > 0.\end{equation}
A more precise statement is given in Theorem \ref{prop:valueoftheintegral} for the ``if'' part, and in Theorem \ref{prop:conversehopf} for the ``only if'' part.

Let us first give a precise definition of what it means for a function $u \in H^s_0(\Omega)$ to satisfy Hopf's Lemma.

\begin{definition} \label{defi:hopf}
Let $\Omega \subset \R^N$ be an open set, and let $x_0 \in \partial \Omega$. Suppose that there exists a ball $B_R(y_0) \subset \Omega$ such that $x_0 \in \partial B_R(y_0)$. For every $\beta \in \left(0, \frac{\pi}{2} \right)$, define
\begin{equation} \label{C_b} C_\beta(x_0) := \left\{x \in \Omega \,\bigg|\,\arccos\left(\frac{x-x_0}{|x-x_0|}\cdot \frac{y_0-x_0}{|y_0-x_0|}\right) < \beta\right\},\end{equation}
Let $u \in H^s_0(\Omega)$ be a function. We say that $u$ \emph{satisfies Hopf's Lemma at $x_0$} if, for every $\beta \in \left(0, \frac{\pi}{2} \right)$, it holds
\[ \liminf_{\stackrel{x \to x_0}{x \in C_\beta(x_0)}} \frac{u(x)}{|x-x_0|^s} > 0.\]
\end{definition}

\begin{prop} \label{prop:weakminimumprinciple}
 Let $c \in L^\infty(\Omega)$, and let $g \in H^s(\R^N)$. Suppose that 
	\begin{equation} \label{eq:conditionforhopf}  \quad \int_{\R^N\setminus B_r(y_0)} \frac{g(x)}{|x-y|^{N+2s}}\,dx \geq 0 \quad \text{for every } y \in B_r(y_0),\end{equation}
	where $B_r(y_0) \subset \R^N$ is a ball of radius $r>0$ centered at  $y_0 \in \R^N$. Suppose moreover that $\lambda_1(B_r(y_0)) > \|c\|_\infty$, where $\lambda_1(B_r(y_0))$ denotes the first eigenvalue of the Dirichlet fractional Laplacian in $B_r(y_0)$, and let $w \in H^s(\R^N)$ satisfy 
	\begin{equation}
		\left\{\begin{array}{r c l l}
			(-\Delta)^s w &\geq& c(x)w& \text{in }B_r(y_0), \\ w &= & g & \text{in }\R^N \setminus B_r(y_0).\end{array}\right.
	\end{equation}
	Suppose that $w^-|_{B_r(y_0)} \in H^s_0(B_r(y_0))$. Then $w \geq 0$ in $B_r(y_0)$.
\end{prop}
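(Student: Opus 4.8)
The plan is to run a weak maximum principle argument: I test the supersolution inequality against the negative part of $w$ and exploit condition \eqref{eq:conditionforhopf} to control the nonlocal interaction between $B_r(y_0)$ and its complement. Write $B := B_r(y_0)$ and let $v$ be the function that equals $w^-$ on $B$ and vanishes on $\R^N \setminus B$; by hypothesis $v \in H^s_0(B)$, and $v \geq 0$, so $v$ is an admissible test function. Since on $B$ one has $wv = -(w^-)^2$ pointwise, plugging $\varphi = v$ into the supersolution inequality (with datum $f = c(x)w$) yields the lower bound
\[ \int_{\R^N}\int_{\R^N} \frac{(w(x)-w(y))(v(x)-v(y))}{|x-y|^{N+2s}}\,dx\,dy \;\geq\; -\int_B c(x)\,(w^-(x))^2\,dx. \]

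The core of the argument is the matching upper bound
\[ \int_{\R^N}\int_{\R^N} \frac{(w(x)-w(y))(v(x)-v(y))}{|x-y|^{N+2s}}\,dx\,dy \;\leq\; -[v]_{H^s(\R^N)}^2. \]
Since $w, v \in H^s(\R^N)$, the integrand is absolutely integrable by Cauchy--Schwarz, so I may split the integration over the four regions determined by whether each variable lies in $B$ or in $\R^N \setminus B$; the region $(\R^N\setminus B)\times(\R^N\setminus B)$ contributes nothing since $v$ vanishes there. On $B \times B$, where $v = w^-$, the elementary pointwise inequality $(w(x)-w(y))(w^-(x)-w^-(y)) \leq -(w^-(x)-w^-(y))^2$ (a consequence of $w^+ w^- = 0$) bounds the contribution by $-\int_B\int_B \frac{(v(x)-v(y))^2}{|x-y|^{N+2s}}$. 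On the two symmetric ``cross'' regions, where exactly one variable lies outside $B$ and there $w = g$, I expand $(w(x)-w(y))\,w^-(x) = -(w^-(x))^2 - g(y)\,w^-(x)$ and integrate; the term carrying $g$ equals $-\int_B w^-(x)\big(\int_{\R^N\setminus B}\frac{g(y)}{|x-y|^{N+2s}}\,dy\big)\,dx$, which is $\leq 0$ because $w^- \geq 0$ and the inner integral is nonnegative by \eqref{eq:conditionforhopf}. Discarding it leaves $-2\int_B\int_{\R^N\setminus B}\frac{(w^-(x))^2}{|x-y|^{N+2s}}$. Summing the three contributions reconstitutes exactly $-[v]_{H^s(\R^N)}^2$, giving the upper bound.

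Combining the two bounds yields $[v]_{H^s(\R^N)}^2 \leq \int_B c(x)(w^-(x))^2\,dx \leq \|c\|_\infty \|v\|_{L^2(B)}^2$. On the other hand, the variational characterization of the first eigenvalue gives $[v]_{H^s(\R^N)}^2 \geq \lambda_1(B)\,\|v\|_{L^2(B)}^2$. If $v \not\equiv 0$, then dividing by $\|v\|_{L^2(B)}^2 > 0$ forces $\lambda_1(B) \leq \|c\|_\infty$, contradicting the assumption $\lambda_1(B) > \|c\|_\infty$. Hence $v \equiv 0$, i.e.\ $w^- = 0$ in $B$, which is precisely $w \geq 0$ in $B$.

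I expect the main obstacle to be the rigorous treatment of the cross terms: one must justify splitting the (absolutely integrable) double integral, isolate the genuinely nonlocal contribution coming from the exterior datum $g$, and check that condition \eqref{eq:conditionforhopf} is exactly what is needed to give this contribution the correct sign. This is the step where the nonlocality of the operator is felt, and it is the only place where the hypothesis on $g$ enters; the concluding eigenvalue comparison is standard.
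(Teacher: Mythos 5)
Your proposal is correct and follows essentially the same route as the paper: test with $w^-|_{B_r(y_0)}$, split the double integral into the $B\times B$ and cross regions, use $w^+w^-=0$ to absorb the interaction term, invoke condition \eqref{eq:conditionforhopf} to discard the exterior contribution of $g$, and conclude via the variational characterization of $\lambda_1(B_r(y_0))$ against $\|c\|_\infty$. The only cosmetic difference is that you discard the nonnegative terms pointwise along the way, whereas the paper writes the full identity first and then drops them; the substance is identical.
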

\begin{proof}
	Set $w=w^+-w^-$. Testing the equation with $w^- |_{B_r(y_0)}$ we obtain
	\begin{align*}
		-\int_{B_r(y_0)} c(x)(w^-)^2 & \leq \int_{\R^N} \int_{\R^N} \frac{(w(x)-w(y))(w^-(x)|_{B_r(y_0)}-w^-(y)|_{B_r(y_0)})}{|x-y|^{N+2s}}\,dx\,dy = \\ & = -\int_{\R^N} \int_{\R^N} \frac{(w^-(x)|_{B_r(y_0)}-w^-(y)|_{B_r(y_0)})^2}{|x-y|^{N+2s}}\,dx\,dy - 2\int_{B_r(y_0)} \int_{B_r(y_0)} \frac{w^+(x)w^-(y)}{|x-y|^{N+2s}}\,dx\,dy \\ & - 2\int_{B_r(y_0)} \int_{\R^N \setminus B_r(y_0)} \frac{w(x)w^-(y)}{|x-y|^{N+2s}}\,dx\,dy \\ & \leq -\lambda_1(B_r(y_0)) \int_{B_r(y_0)}(w^-)^2 - 2 \int_{B_r(y_0)} w^-(y) \left(\int_{\R^N \setminus B_r(y_0)} \frac{g(x)}{|x-y|^{N+2s}}\,dx \right)\,dy,
	\end{align*}
	Thus, by assumption \eqref{eq:conditionforhopf},
	\begin{equation}\label{1} (\lambda_1(B_r(y_0)) - \|c\|_\infty )\int_{B_r(y_0)}(w^-)^2 \leq 0,\end{equation}
	which implies $w^- \equiv 0$ in $B_r(y_0)$.
	
\end{proof}
\begin{thm} \label{prop:valueoftheintegral}
	Let $\Omega \subset \R^N$ be an open set.  Let $c \in L^\infty(\Omega)$, and let $u \in H^s_0(\Omega) \cap C(\R^N)$ satisfy
	\begin{equation}
		\left\{\begin{array}{r c l l}
			(-\Delta)^s u &\geq& c(x)u& \text{in }\Omega, \\ u &= & 0 & \text{in }\R^N \setminus\Omega.\end{array}\right.
	\end{equation}
	Let $x_0 \in \partial \Omega$, and suppose that $\Omega$ satisfies an interior ball condition at $x_0$.	Suppose that there exists a ball $B_\delta(x_0)$ of radius $\delta >0$ centered at $x_0$ such that $u \geq 0$ in $B_\delta(x_0)$. 
	Let $U$
	be the Caffarelli-Silvestre extension of $u$.
	If 
	\begin{equation}\label{HopfsU}  \liminf_{t \to 0^+} \frac{U(x_0,t)-U(x_0,0)}{t^{2s}} > 0, \end{equation}
	then $u$ satisfies Hopf's Lemma at $x_0$ in the sense of Definition \ref{defi:hopf}.
\end{thm}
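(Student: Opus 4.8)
The plan is to translate the hypothesis \eqref{HopfsU} into a positivity statement for a weighted integral of $u$, and then to feed this positivity into the weak minimum principle of Proposition \ref{prop:weakminimumprinciple}, applied on a small interior ball, in order to compare $u$ from below with a barrier that behaves like $\mathrm{dist}(\cdot,\partial\Omega)^s$.

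First I would reinterpret \eqref{HopfsU}. Since $x_0\in\partial\Omega$, $u=0$ in $\R^N\setminus\Omega$ and $u\in C(\R^N)$, we have $U(x_0,0)=u(x_0)=0$. Inserting the representation formula \eqref{eq:representation} into the difference quotient gives
\[ \frac{U(x_0,t)-U(x_0,0)}{t^{2s}}=P_{N,s}\int_{\R^N}\frac{u(y)}{(|x_0-y|^2+t^2)^{\frac{N+2s}{2}}}\,dy. \]
Splitting the integral over $B_\delta(x_0)$ and its complement, the integrand on $B_\delta(x_0)$ is nonnegative (since $u\geq0$ there) and increases as $t\downarrow0$, so by monotone convergence it tends to $\int_{B_\delta(x_0)}u(y)|x_0-y|^{-N-2s}\,dy\in[0,+\infty]$, while on the complement the kernel stays bounded and $u(\cdot)|x_0-\cdot|^{-N-2s}\in L^1(\R^N\setminus B_\delta(x_0))$, so dominated convergence applies. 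Hence the $\liminf$ in \eqref{HopfsU} is in fact a limit equal to $P_{N,s}J$, where $J:=\int_{\R^N}u(y)|x_0-y|^{-N-2s}\,dy\in(-\infty,+\infty]$, and \eqref{HopfsU} is equivalent to $J>0$.

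Next I would transfer $J>0$ into the hypothesis \eqref{eq:conditionforhopf} of Proposition \ref{prop:weakminimumprinciple}. Using the interior ball condition, I choose a ball $B=B_{R'}(y_0')$ with $y_0'$ on the segment $[x_0,y_0]$, $x_0\in\partial B$ and $B\subset B_\delta(x_0)\cap\Omega$, so that $u\geq0$ in $B$; by the scaling $\lambda_1(tB)=t^{-2s}\lambda_1(B)$ I may also arrange $\lambda_1(B)>\|c\|_\infty$ after taking $R'$ small. For $g:=u|_{\R^N\setminus B}$ and $y\in B$ put $E(y):=\int_{\R^N\setminus B}g(x)|x-y|^{-N-2s}\,dx$. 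As $R'\downarrow0$ the ball collapses to $\{x_0\}$, every $y\in B$ tends to $x_0$, the excluded region shrinks, and the same monotone/dominated convergence as above yields $\inf_{y\in B}E(y)\to J>0$ uniformly in $y$; hence for $R'$ small enough the condition \eqref{eq:conditionforhopf} holds. This is exactly the step where the extension hypothesis enters, and it is consistent with the counterexample of \cite{dipierrosoavevaldinoci}, where $J\leq0$ and Hopf fails.

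Finally I would run a barrier comparison on $B$. Since $u\geq0$ in $B$ we have $(-\Delta)^s u\geq c(x)u\geq-\|c\|_\infty u$ in $B$, so $u$ is a supersolution for the constant coefficient $-\|c\|_\infty$, whose size is still below $\lambda_1(B)$. The goal is to produce a barrier $\phi$, supported in $\overline B$, with $\phi(x)\gtrsim|x-x_0|^s$ for $x\in C_\beta(x_0)$ near $x_0$ (using that the cone meets $\partial B$ non-tangentially at $x_0$), and then to apply Proposition \ref{prop:weakminimumprinciple} to $w=u-\varepsilon\phi$ for a small $\varepsilon>0$: the exterior condition is guaranteed by the previous step and $\lambda_1(B)>\|c\|_\infty$ holds, so one would conclude $u\geq\varepsilon\phi\gtrsim|x-x_0|^s$ in the cone, which is Definition \ref{defi:hopf}. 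The main obstacle is the choice of $\phi$: for $w$ to be a supersolution of $(-\Delta)^s w\geq-\|c\|_\infty w$ one needs $\phi$ to be a \emph{subsolution}, $(-\Delta)^s\phi\leq-\|c\|_\infty\phi$ in $B$, whereas the fractional torsion function $(R'^2-|\cdot-y_0'|^2)_+^s$ has $(-\Delta)^s\phi$ equal to a positive constant and is therefore a supersolution, of the wrong sign. Resolving this requires a refined barrier with the same $d^s$ boundary profile but a favourable (in particular negative) nonlocal tail — adapting the construction used for globally nonnegative supersolutions by Greco--Servadei \cite{grecoservadei}, the global sign being now replaced by \eqref{eq:conditionforhopf} — together with a careful verification that $w^-|_B\in H^s_0(B)$; this is the part I expect to demand the most care.
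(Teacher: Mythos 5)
Your first two steps coincide with the paper's own proof: using the representation formula together with monotone and dominated convergence, hypothesis \eqref{HopfsU} is equivalent to $\int_{\R^N} u(y)\,|x_0-y|^{-N-2s}\,dy > 0$ (possibly $+\infty$), and a continuity/compactness argument transfers this strict positivity to condition \eqref{eq:conditionforhopf} on a small tangent ball $B_r(y_0) \subset B_\delta(x_0)\cap\Omega$ chosen so that $\lambda_1(B_r(y_0)) > \|c\|_\infty$. Up to this point the proposal is sound and essentially identical to the paper.

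The gap is the final barrier step, which you explicitly leave open, and the route you indicate for closing it cannot work as stated. You ask for a nonnegative barrier $\phi$ \emph{supported in} $\overline{B}$, with $\phi \gtrsim |x-x_0|^s$ in the cone, satisfying $(-\Delta)^s\phi \leq -\|c\|_\infty\phi$ weakly in $B$. No such $\phi$ exists: testing the subsolution inequality with $\varphi = \phi \in H^s_0(B)$, $\varphi \geq 0$, gives $[\phi]_{H^s(\R^N)}^2 \leq -\|c\|_\infty \|\phi\|_{L^2(B)}^2 \leq 0$, forcing $\phi \equiv 0$. The paper's key idea is precisely to drop the support constraint: it sets $\eta := \psi + \alpha\xi$, where $\psi$ is the torsion function of $B_r(y_0)$ (with the explicit $d^s$ profile) and $\xi$ is a nonnegative bump supported in a ball $B_{2R}(z_0)\subset\Omega$ at positive distance from $B_r(y_0)$. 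Inside $B_r(y_0)$ one still has $\eta = \psi$, but the nonlocal interaction with the distant bump contributes the term $-2\alpha\int_{B_r(y_0)}\int_{B_{2R}(z_0)} \varphi(x)\xi(y)\,|x-y|^{-N-2s}\,dx\,dy$ to the energy, so for $\alpha$ large one gets $(-\Delta)^s\eta \leq -\|c\|_\infty\|\psi\|_\infty \leq c(x)\eta$ in $B_r(y_0)$: the ``favourable negative tail'' you hoped for is manufactured by placing mass \emph{outside} the comparison ball, exactly what your support hypothesis forbids. This modification also has a cost your sketch does not account for: the exterior datum of $w = u - \varepsilon\eta$ is no longer $u$ but $u - \varepsilon\alpha\xi$, so condition \eqref{eq:conditionforhopf} must be re-verified for this datum; the paper does so by first securing the \emph{strict} inequality $\int_{\R^N\setminus B_r(y_0)} u(x)\,|x-y|^{-N-2s}\,dx > 0$ uniformly over $y\in B_r(y_0)$ and then taking $\varepsilon$ small. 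With $\eta$ in hand, Proposition \ref{prop:weakminimumprinciple} yields $u \geq \varepsilon\psi$ in $B_r(y_0)$, and Hopf's Lemma follows as in your outline.
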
	

\begin{proof}		
	Arguing as in \cite[Section 3.1]{caffarellisilvestre} we obtain, by using the change of variable $z= \left( \frac{t}{2s}\right)^{2s}$,
	\[ \frac{U(x_0,z)-U(x_0,0)}{z} = \tilde{P}_{N,s} \int_{\R^N} \frac{u(x)}{\left(|x_0-x|^2 + 4s^2 |z|^{\frac{1}{s}}\right)^{\frac{N+2s}{2}}}\,dx\]
	for every $z>0$, where $\tilde{P}_{N,s} = (2s)^{2s} P_{n,s}$  is a positive constant depending on $N$ and $s$. Let us decompose the integral into the sum of two terms:
	\begin{align*}  \int_{\R^N} \frac{u(x)}{\left(|x_0-x|^2 + 4s^2 |z|^{\frac{1}{s}}\right)^{\frac{N+2s}{2}}}\,dx  & =  \int_{\R^N \setminus B_\delta(x_0)} \frac{u(x)}{\left(|x_0-x|^2 + 4s^2 |z|^{\frac{1}{s}}\right)^{\frac{N+2s}{2}}}\,dx \\ & +  \int_{B_\delta(x_0)} \frac{u(x)}{\left(|x_0-x|^2 + 4s^2 |z|^{\frac{1}{s}}\right)^{\frac{N+2s}{2}}}\,dx.\end{align*}
	For $z \to 0^+$, the first integral converges to
	\[ \int_{\R^N \setminus B_\delta(x_0)} \frac{u(x)}{|x_0-x|^{N+2s}}\,dx \]
	by Lebesgue's Dominated Convergence Theorem, while the second integral tends to 
	\[ \int_{B_\delta(x_0)} \frac{u(x)}{|x_0-x|^{N+2s}}\,dx \]
	by the Monotone Convergence Theorem. Observe that this last quantity might be equal to $+\infty$.
	By \eqref{HopfsU} we obtain that
	\[ \int_{\R^N} \frac{u(x)}{|x_0-x|^{N+2s}}\,dx  > 0,\]
	where the value of the integral might be equal to $+\infty$.
	By continuity, there exists  $r \in \left(0,\frac{\delta}{2}\right)$ such that
		\[  \int_{\R^N \setminus B_r(y_0)} \frac{u(x)}{|x_0-x|^{N+2s}}\,dx > 0 \]
		and
	\[  \int_{\R^N \setminus B_r(y_0)} \frac{u(x)}{|x-y|^{N+2s}}\,dx > 0 \quad \text{for every } y \in B_r(y_0),\]
	where $B_r(y_0)$ is a ball of radius $r>0$ centered at a point $y_0 \in \Omega$ such that $x_0 \in \partial B_r(y_0)$, and $B_r(y_0) \subset B_\delta(x_0)$. Moreover, by the scaling properties of the eigenvalues of the fractional Laplacian, it is possible to choose $r$ such that, without loss of generality, $\lambda_1(B_r(y_0))>\|c\|_\infty$. 
	Let $\psi \in H^s_0(B_r(y_0))$ be the solution to
	\begin{equation*}
		\left\{\begin{array}{r c l l}
			(-\Delta)^s \psi &=& 1& \text{in }B_r(y_0), \\ \psi &= & 0 & \text{in }\R^N \setminus B_r(y_0).\end{array}\right.
	\end{equation*}
	$\psi$ has the explicit expression
	\[ \psi(x) = \gamma_{N,s}(r^2 - |x-x_0|^2)^s,\qquad \gamma_{N,s}:=\frac{4^{-s}\Gamma\left( \frac{N}{2}\right)}{\Gamma\left( \frac{N}{2}+s\right)\Gamma(1+s)}.\]
	Let $R >0$ be such that there exists a ball $B_{2R}(z_0) \subset \Omega$ of radius $2R$ centered at $z_0 \in \Omega$  such that $\text{dist}(B_{2R}(z_0),B_r(y_0))>0$. For $\alpha > 0$, define
	\[ \eta := \psi + \alpha \xi,\]
	where $\xi \in \mathcal C^\infty_c(B_{2R}(z_0))$ is a nonnegative function, such that $\xi \geq 1$ in $B_R$. Let $\varphi \in H^s_0(B_r(y_0))$ be a nonnegative function. Then we have
	\begin{align*}
		\int_{\R^N} \int_{\R^N} \frac{(\eta(x)-\eta(y))(\varphi(x)-\varphi(y))}{|x-y|^{N+2s}}\,dx\,dy & = 	\int_{\R^N} \int_{\R^N} \frac{(\psi(x)-\psi(y))(\varphi(x)-\varphi(y))}{|x-y|^{N+2s}}\,dx\,dy \\ & + \alpha 	\int_{\R^N} \int_{\R^N} \frac{(\xi(x)-\xi(y))(\varphi(x)-\varphi(y))}{|x-y|^{N+2s}}\,dx\,dy \\ & = \int_{B_r(y_0)} \varphi(x)\,dx - 2\alpha \int_{B_r(y_0)} \int_{B_{2R}(z_0)} \frac{\varphi(x)\xi(y)}{|x-y|^{N+2s}}\,dx\,dy \\ & \leq \int_{B_r(y_0)} \varphi(x)\,dx - 2\alpha \int_{B_r(y_0)} \int_{B_{R}(z_0)} \frac{\varphi(x)}{|x-y|^{N+2s}}\,dx\,dy \\ & \leq (1-\alpha C) \int_{B_r(y_0)} \varphi(x)\,dx.
	\end{align*}
	By taking $\alpha$ sufficiently big so that
\[ 1-\alpha C \leq  -\|c\|_\infty\|\psi\|_{\infty},\]
we obtain that, in $B_r(y_0)$,
 \[ (-\Delta)^s \eta \leq -\|c\|_\infty\|\psi\|_{\infty}\leq -\|c\|_\infty\psi= -\|c\|_\infty\eta\leq c(x) \eta,\]
since $\text{dist}(B_{2R}(z_0),B_r(y_0))>0$ and $\eta = \psi \geq 0$ in $B_r(y_0)$.
	
	By continuity, there exists $\varepsilon > 0$ sufficiently small such that the condition
	\[  \quad \int_{\R^N \setminus B_r(y_0)} \frac{u(x) - \varepsilon \eta(x)}{|x-y|^{N+2s}}\,dx >0 \quad \text{for every } y \in B_r(y_0)\]
	is satisfied. Recalling that $u$ is non-negative in $B_r(y_0)$, we can assert that $(u-\varepsilon \eta)^-|_{B_r(y_0)} \in H^s_0(B_r(y_0))$. By Proposition \ref{prop:weakminimumprinciple} with $w=u-\varepsilon \eta \in H^s_0(\Omega)$ it holds
	\[ u \geq \varepsilon \psi \qquad \text{in }B_r(y_0),\]
	which implies that $u$ satisfies Hopf's Lemma at $x_0 \in \partial \Omega$.
\end{proof}

\begin{rem}
The condition
\begin{equation} \label{eq:hopfcaffsilv}  \liminf_{t \to 0^+} \frac{U(x_0,t)-U(x_0,0)}{t^{2s}} > 0.\end{equation}
is satisfied when, in particular, $u$ is a nontrivial solution to
		\begin{equation}
		\left\{\begin{array}{r c l l}
			(-\Delta)^s u &=& c(x)u& \text{in }\Omega, \\ u &= & 0 & \text{in }\R^N \setminus\Omega,\end{array}\right.
	\end{equation}
and	there exists a neighbourhood $\mathcal{V} \subset \R^{N+1}_+$ with $(x_0,0) \in \mathcal{V}$ such that $U \geq 0$ in $\mathcal{V}$. Indeed, in this case, the strong maximum principle for strictly elliptic operators guarantees that either $U\equiv 0$ or  $U>0$ in $ \mathring{\mathcal{V}}$ (see for example \cite[Remark 4.2]{cabresire}). The former case cannot occur because otherwise by continuity we would have $u=0$ in an open set, a contradiction to the unique continuation property for the fractional Laplacian \cite[Theorem 1.2]{ghoshsalouhlmann}. Since $U(x_0,0)=u(x_0)=0$, the Hopf's Lemma for the Caffarelli-Silvestre extension \cite[Proposition 4.11]{cabresire} implies \eqref{eq:hopfcaffsilv}.
\end{rem}

\begin{thm} \label{prop:conversehopf}
	Let $\Omega \subset \R^N$ be an open set.  Let $c \in L^\infty(\Omega)$, and let $u \in H^s_0(\Omega) \cap C(\R^N)$ be such that $u^- \in L^\infty(\Omega)$. Let $x_0 \in \partial \Omega$. Suppose that $\Omega$ satisfies an interior ball condition at $x_0$, and suppose that there exists a ball $B_{\delta}(x_0)$ such that $u \geq 0$ in $B_{\delta}(x_0) \cap \Omega$. Let $U$ 
	be the Caffarelli-Silvestre extension of $u$.
	If $u$ satisfies Hopf's Lemma at $x_0$ in the sense of Definition \ref{defi:hopf}, then
	\[	 \lim_{t \to 0^+} \frac{U(x_0,t)-U(x_0,0)}{t^{2s}} = +\infty,\]
and, therefore,  $U$  satisfies Hopf's Lemma in the sense of \eqref{eq:HopfU}.
\end{thm}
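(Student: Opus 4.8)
The plan is to reuse the integral identity established at the start of the proof of Theorem \ref{prop:valueoftheintegral}. With the substitution $z=\left(\frac{t}{2s}\right)^{2s}$ used there one has $t^{2s}=(2s)^{2s}z$, so that
\[ \frac{U(x_0,t)-U(x_0,0)}{t^{2s}}=\frac{\tilde P_{N,s}}{(2s)^{2s}}\,I(z)=P_{N,s}\,I(z),\qquad I(z):=\int_{\R^N}\frac{u(x)}{\left(|x_0-x|^2+4s^2 z^{1/s}\right)^{\frac{N+2s}{2}}}\,dx, \]
and $t\to 0^+$ corresponds to $z\to 0^+$. It therefore suffices to prove that $I(z)\to+\infty$ as $z\to 0^+$, which is strictly stronger than \eqref{eq:HopfU}. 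As in the previous proof I would split $I(z)$ into the contribution of $B_\delta(x_0)$ and that of $\R^N\setminus B_\delta(x_0)$, and analyse the two pieces separately.

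For the exterior piece, on $\R^N\setminus B_\delta(x_0)$ the integrand is dominated by the $z$-independent function $|u(x)|/|x_0-x|^{N+2s}$, which is integrable there by the hypotheses on $u$ (namely $u\in L^2(\R^N)$ together with $u^-\in L^\infty(\Omega)$), using that $|x_0-x|^{-(N+2s)}\in L^2(\R^N\setminus B_\delta(x_0))$ since $2(N+2s)>N$. Hence Lebesgue's Dominated Convergence Theorem yields that the exterior integral converges, as $z\to 0^+$, to the \emph{finite} value $\int_{\R^N\setminus B_\delta(x_0)}u(x)\,|x_0-x|^{-(N+2s)}\,dx$.

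The interior piece is the heart of the matter, and is where Hopf's Lemma for $u$ enters. Since $u\ge 0$ in $B_\delta(x_0)\cap\Omega$ and $u=0$ on $B_\delta(x_0)\setminus\Omega$, the integrand over $B_\delta(x_0)$ is nonnegative and increases monotonically as $z\to 0^+$; by the Monotone Convergence Theorem the interior integral tends to $\int_{B_\delta(x_0)}u(x)\,|x_0-x|^{-(N+2s)}\,dx\in(0,+\infty]$. Now fix any $\beta\in\left(0,\frac{\pi}{2}\right)$. By Definition \ref{defi:hopf} there exist $c_\beta>0$ and $\rho\in(0,\delta)$ such that $u(x)\ge c_\beta|x-x_0|^s$ for every $x\in C_\beta(x_0)$ with $|x-x_0|<\rho$. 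Discarding the nonnegative contribution of $B_\delta(x_0)\setminus C_\beta(x_0)$ and passing to polar coordinates centred at $x_0$ gives
\[ \int_{B_\delta(x_0)}\frac{u(x)}{|x_0-x|^{N+2s}}\,dx\ \ge\ c_\beta\int_{C_\beta(x_0)\cap B_\rho(x_0)}\frac{|x-x_0|^s}{|x_0-x|^{N+2s}}\,dx\ =\ c_\beta\,\omega_\beta\int_0^\rho r^{-1-s}\,dr\ =\ +\infty, \]
where $\omega_\beta>0$ is the measure of the solid angle subtended by $C_\beta(x_0)$; the divergence is exactly the non-integrability of $r^{-1-s}$ at the origin for $s>0$. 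Combining the two pieces, $I(z)\to+\infty$, which gives the claim.

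I expect the main obstacle to be precisely this interior estimate: whereas Theorem \ref{prop:valueoftheintegral} only needed the limiting integral to be positive, here the conclusion must be upgraded to $+\infty$, and this upgrade is furnished solely by the sharp lower bound $u(x)\gtrsim|x-x_0|^s$ from Hopf's Lemma, which converts the radial factor into the non-integrable $r^{-1-s}$. The remaining care concerns making sure the truncated cone $C_\beta(x_0)\cap B_\rho(x_0)$ lies inside $B_\delta(x_0)\cap\Omega$, so that both the sign condition $u\ge0$ and the Hopf lower bound are simultaneously available there, and that the exterior integral is genuinely finite so that it cannot cancel the divergence of the interior part.
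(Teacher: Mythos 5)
Your proof is correct, and its core mechanism is the same as the paper's: reduce, via the representation formula, to the behaviour of $\int_\Omega u(y)\,(|x_0-y|^2+t^2)^{-\frac{N+2s}{2}}\,dy$ as $t \to 0^+$, let the Hopf bound $u(x) \geq c_\beta |x-x_0|^s$ on a truncated cone produce the non-integrable factor $r^{-1-s}$ (via monotone convergence), and use nonnegativity of $u$ near $x_0$ for the rest of the near region. Where you genuinely deviate is in the decomposition and the far-field control. The paper splits $\Omega$ into three pieces, $(B_r(x_0)\cap C_\beta(x_0)) \cup ((\Omega\setminus C_\beta(x_0))\cap B_r(x_0)) \cup (\Omega\setminus B_r(x_0))$, and on the last piece it only proves a $t$-uniform lower bound $\geq -\|u^-\|_\infty (N\omega_N)\int_r^{+\infty} z^{-1-2s}\,dz > -\infty$; this is exactly where the hypothesis $u^- \in L^\infty(\Omega)$ enters. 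You instead use a two-piece split, $B_\delta(x_0)$ and its complement, and show by domination with $|u(x)|\,|x_0-x|^{-(N+2s)}$, Cauchy--Schwarz and $u \in L^2(\R^N)$, that the exterior term converges to a \emph{finite} limit. As a consequence your argument never uses $u^- \in L^\infty(\Omega)$ at all, replacing it by the $L^2$-integrability already implicit in $u$ admitting a Caffarelli--Silvestre extension (i.e.\ $u \in H^s(\R^N)$), so your variant in fact runs under slightly weaker hypotheses, at the cost of invoking dominated convergence where the paper needs only a one-sided bound. One step that both you and the paper treat somewhat implicitly: identifying $C_\beta(x_0)\cap B_\rho(x_0)$ with a genuine solid truncated cone, so that the angular factor $\omega_\beta$ is bounded below, uses the interior ball condition (the geometric cone of aperture $\beta < \frac{\pi}{2}$ and radius $\min(\rho, 2R\cos\beta)$ lies in the interior ball, hence in $\Omega$); you flag this concern at the end, and it is indeed harmless.
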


\begin{proof}
	Fix $\beta \in \left(0,\frac{\pi}{2}\right)$, and set $C_\beta(x_0)$ as in \eqref{C_b}. 
	Since $u$ satisfies Hopf's lemma at $x_0$, we can assert that there exists $r \in (0,\delta)$ and $c>0$ such that
	\[ u(x)\geq c|x-x_0|^s \qquad\forall x\in C_\beta(x_0) \cap B_r(x_0).\]
	Using the representation formula \eqref{eq:representation} for the solution $U$ to \eqref{eq:caffarellisilvestre}, we obtain
	\[U(x,t)=P_{N,s}\int_{\R^n}\frac{u(y)t^{2s}}{(|x-y|^2+t^{2})^\frac{N+2s}{2}}\,dy,
	\]	
which is in particular true for $x=x_0$. Taking into account that $u = 0$ in $\R^N \setminus \Omega$ and $u(x_0)=0$, we consider the difference quotient
	\[  \frac{U(x_0,t)-U(x_0,0)}{t^{2s}} = \frac{U(x_0,t)}{t^{2s}} = P_{N,s}\int_{\Omega}\frac{u(y)}{(|x_0-y|^2+t^{2})^\frac{N+2s}{2}}\,dy.\]
	We now split the domain $\Omega$ as follows:
	\[ \Omega = (B_{r}(x_0)\cap C_\beta(x_0))\cup((\Omega\setminus C_\beta(x_0)) \cap B_{r}(x_0))   \cup (\Omega\setminus B_{r}(x_0)).\]
	In $B_r(x_0)\cap C_\beta(x_0)$, we have
	\[ \int_{B_r(x_0)\cap C_\beta(x_0)} \frac{u(y)}{(|x_0-y|^2+t^{2})^\frac{N+2s}{2}}\,dy \geq c\int_{B_r(x_0)\cap C_\beta(x_0)} \frac{|x_0-y|^{s}}{(|x_0-y|^2+t^{2})^\frac{N+2s}{2}}\,dy. \]
	By the Monotone Convergence Theorem, the last integral tends, up to a multiplicative constant, to
	\[ \int_0^{r} \frac{1}{z^{1+s}}\,dz = +\infty.\]
	In $(\Omega\setminus C_\beta(x_0)) \cap B_{r}(x_0)$, it holds	
	\[ \int_{(\Omega\setminus C_\beta(x_0)) \cap B_{r}(x_0)} \frac{u(y)}{(|x_0-y|^2+t^{2})^\frac{N+2s}{2}}\,dy \geq 0,\]
	and since $u \geq 0$ in $B_r(x_0)$, the Monotone Convergence Theorem implies that the integral converges to a nonnegative quantity (finite or infinite). Finally, on $\Omega\setminus B_{r}(x_0)$, we exploit the boundedness of $u^-$, so that
	\begin{align*} \int_{\Omega\setminus B_{r}(x_0)} \frac{u(y)}{(|x_0-y|^2+t^{2})^\frac{N+2s}{2}}\,dy & \geq - \|u^-\|_{\infty} \int_{\Omega\setminus B_{r}(x_0)} \frac{1}{(|x_0-y|^2+t^{2})^\frac{N+2s}{2}}\,dy \\ & \geq - \|u^-\|_{\infty} \int_{\R^N \setminus B_{r}(x_0)} \frac{1}{(|x_0-y|^2+t^{2})^\frac{N+2s}{2}}\,dy \\ & \geq - \|u^-\|_{\infty} \int_{\R^N \setminus B_{r}(x_0)} \frac{1}{|x_0-y|^{N+2s}}\,dy \\ & = - \|u^-\|_{\infty} (N\omega_N)\int_{r}^{+\infty} \frac{1}{z^{1+2s}}\,dz > -\infty,\end{align*}
	where $\omega_N$ is the volume of the $N$-dimensional unit ball.
	All in all, since $C_{n,s}$ is a positive constant, we finally obtain that
	\[  \lim_{t \to 0^+} \frac{U(x_0,t)-U(x_0,0)}{t^{2s}} = +\infty.\]
\end{proof}

\section{On a result by Dipierro-Soave-Valdinoci}\label{secUu}

In the following we will prove a Hopf's Lemma result in the spirit of \cite[Theorem 1.2]{dipierrosoavevaldinoci} by exploting similar arguments as in the previous section.

\begin{thm}\label{thm:equiv-lim}
Let $\Omega \subset \R^N$ be an open set.  Let $c \in L^\infty(\Omega)$, and let $u \in H^s_0(\Omega) \cap C(\R^N)$ satisfy 
\begin{equation}\label{eq1}
	\left\{\begin{array}{r c l l}
		(-\Delta)^s u &\geq & c(x)u& \text{in }\Omega, \\ u &= & 0 & \text{in }\R^N \setminus\Omega.\end{array}\right.
\end{equation}	
We suppose that $u^- \in L^\infty(\Omega)$. Let $x_0 \in \partial \Omega$. Suppose that $\Omega$ satisfies an interior ball condition at $x_0$, and suppose that there exists a ball $B_{\delta}(x_0)$ such that $u \geq 0$ in $B_{\delta}(x_0) \cap \Omega$. For every $\beta \in \left(0, \frac{\pi}{2} \right)$, let $C_\beta(x_0)$ be as defined in \eqref{C_b}.
For $r>0$ small enough, let $z_r \in \Omega$ such that $B_r(z_r) \subset \Omega$ and $x_0 \in \partial B_r(z_r)$.
The two following conditions are equivalent:
\begin{enumerate}
	\item[(i)]  \[ \limsup_{r \to 0^+} \left(\frac{1}{r^{2s}}\inf_{B_{r/2}(z_r)} |u| \right) > 0.\]
	\item[(ii)] For every $\beta \in \left(0, \frac{\pi}{2} \right)$, \[ \liminf_{\stackrel{x \to x_0}{x \in C_\beta(x_0)}} \frac{u(x)}{|x-x_0|^s} > 0.\]

\end{enumerate}
\end{thm}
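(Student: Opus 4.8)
The plan is to establish the two implications separately, regarding (ii)$\Rightarrow$(i) as the elementary direction and (i)$\Rightarrow$(ii) as the substantial one, which I will reduce to the already-proved Theorem \ref{prop:valueoftheintegral}. Throughout I take $z_r = x_0 + r\nu$ with $\nu := (y_0-x_0)/|y_0-x_0|$ the inner direction supplied by the interior ball condition, so that $z_r$ lies on the very axis about which the cone $C_\beta(x_0)$ of \eqref{C_b} is built. For the implication (ii)$\Rightarrow$(i), a direct computation shows that every $y \in B_{r/2}(z_r)$ satisfies $r/2 < |y-x_0| < 3r/2$ and makes an angle at most $\arcsin(1/2)=\pi/6$ with $\nu$; hence $B_{r/2}(z_r) \subset C_\beta(x_0)$ for any fixed $\beta \in (\pi/6,\tfrac\pi2)$ and all small $r$. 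Applying Hopf's Lemma (ii) with this $\beta$ yields $c>0$ with $u(y) \geq c|y-x_0|^s \geq c(r/2)^s$ on $B_{r/2}(z_r)$, so that $r^{-2s}\inf_{B_{r/2}(z_r)}|u| \geq c\,2^{-s}r^{-s} \to +\infty$; in particular the $\limsup$ in (i) is strictly positive.

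For the converse (i)$\Rightarrow$(ii), the decisive idea is that a positive $\limsup$ furnishes \emph{infinitely many} scales at which $u$ is quantitatively positive, and that by spacing them out the corresponding balls can be made pairwise disjoint. Concretely, from (i) I would extract a sequence $\rho_k \downarrow 0$ and a constant $m>0$ with $\inf_{B_{\rho_k/2}(z_{\rho_k})}|u| \geq m\,\rho_k^{2s}$, and then pass to a subsequence obeying $\rho_{k+1} < \rho_k/3$. Each ball $B_{\rho_k/2}(z_{\rho_k})$ lies in the shell $\{\rho_k/2 < |y-x_0| < 3\rho_k/2\}$, so this sparseness makes them pairwise disjoint; moreover for $k$ large they sit inside $B_\delta(x_0)\cap\Omega$, where $u\geq 0$, whence $u \geq m\,\rho_k^{2s}$ on each of them.

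The crux is a scaling balance: since $|y-x_0| \leq \tfrac32\rho_k$ on the $k$-th ball and its measure is $\omega_N(\rho_k/2)^N$, each ball contributes
\[ \int_{B_{\rho_k/2}(z_{\rho_k})} \frac{u(y)}{|x_0-y|^{N+2s}}\,dy \;\geq\; m\,\rho_k^{2s}\,\omega_N\Big(\tfrac{\rho_k}{2}\Big)^{N}\Big(\tfrac{3\rho_k}{2}\Big)^{-(N+2s)} \;=\; P, \]
a \emph{fixed} positive constant independent of $k$, because the growth rate $2s$ cancels exactly against the kernel. Summing over the disjoint balls and discarding the remaining (nonnegative) mass on $B_\delta(x_0)$ forces $\int_{B_\delta(x_0)} u(y)\,|x_0-y|^{-(N+2s)}\,dy = +\infty$. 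Since $u^- \in L^\infty(\Omega)$, the complementary integral over $\R^N\setminus B_\delta(x_0)$ is bounded below by $-\|u^-\|_\infty \int_{\R^N\setminus B_\delta(x_0)}|x_0-y|^{-(N+2s)}\,dy > -\infty$, so $\int_{\R^N} u(y)\,|x_0-y|^{-(N+2s)}\,dy = +\infty$. Feeding this into the representation formula \eqref{eq:representation} exactly as in the proofs of Theorems \ref{prop:valueoftheintegral} and \ref{prop:conversehopf} (monotone convergence on $B_\delta(x_0)$, dominated convergence on its complement, and $U(x_0,0)=u(x_0)=0$) gives $\liminf_{t\to 0^+} t^{-2s}(U(x_0,t)-U(x_0,0)) = +\infty > 0$, i.e.\ \eqref{HopfsU}, and Theorem \ref{prop:valueoftheintegral} then delivers (ii).

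The main obstacle is conceptual rather than computational: at a \emph{single} scale one only obtains the bounded contribution $P$, which need not dominate the bounded (and possibly negative) far-field term, so there is no way to conclude positivity of the integral scale by scale. The resolution is to resist that temptation and instead use that the $\limsup$ produces an entire sparse family whose contributions \emph{add up} to $+\infty$. The remaining points—that $z_r$ along the normal and the sparseness condition $\rho_{k+1}<\rho_k/3$ can be imposed simultaneously, and that the far-field tail stays finite—are routine verifications.
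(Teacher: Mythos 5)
Your proof is correct and follows essentially the same route as the paper's: for (i)$\Rightarrow$(ii) you extract a sparse family of pairwise disjoint balls each contributing a fixed positive amount to $\int u(y)\,|x_0-y|^{-(N+2s)}\,dy$ (the same exact cancellation of the $2s$-growth against the kernel that the paper uses), conclude that the integral diverges, handle the far field via $u^-\in L^\infty$, and reduce to Theorem \ref{prop:valueoftheintegral}; for (ii)$\Rightarrow$(i) you use the same cone-inclusion geometry ($B_{r/2}(z_r)\subset C_\beta(x_0)$ with $|y-x_0|\asymp r$). The only cosmetic differences are that you prove (ii)$\Rightarrow$(i) directly rather than by contradiction, and that you pass through \eqref{HopfsU} so as to invoke Theorem \ref{prop:valueoftheintegral} as a black box, where the paper instead says ``argue as in'' its proof.
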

\begin{proof}
$(i) \Rightarrow (ii)$. Set
\[ \gamma := \limsup_{r \to 0^+} \left(\frac{1}{r^{2s}}\inf_{B_{r/2}{(z_r)}} |u| \right), \]
which is well defined thanks to the interior ball condition. By assumption, it holds $\gamma>0$. By definition of limsup, there exists a sequence $\{r_n\}_{n \in \N}$ of positive numbers, monotonically decreasing to $0$, such that, if we set $B_n := B_{r_n/2}(z_{r_n})$,
\[ \inf_{x \in B_n} u(x) \geq \frac{\gamma}{2}(r_n)^{2s}.\]
Without loss of generality, we can suppose that the balls $B_n$ are pairwise disjoint. Define
\[ C := \bigcup_{n \in \N} B_n.\]
Let $y \in C$. By definition, there exists $n \in \N$ such that $y \in B_n$. Observe that, by the triangle inequality, 
\[ |x_0-y| \leq |x_0-z_{r_n}| + |z_{r_n}-y| \leq \frac{3}{2}r_n \]
and
\[ |x_0-y| \geq |x_0-z_{r_n}|-|z_{r_n} - y| \geq \frac{1}{2}r_n.\]
As a consequence,
\[ u(y) \geq \inf_{x \in B_n} u(x) \geq \frac{\gamma}{2}(r_n)^{2s} \geq \tilde{\gamma} |x_0-y|^{2s},\]
where $\tilde{\gamma}>0$ is independent on $n$.
Decompose
\[ \Omega = C \cup ((B_{\delta}(x_0)\cap \Omega) \setminus C) \cup (\Omega \setminus (C \cup B_{\delta}(x_0)).\]
We have that
\[ \int_{B_{\delta}(x_0) \setminus C} \frac{u(y)}{|x_0-y|^{N+2s}}\,dy \geq 0,\]
the value of the integral being possibly $+\infty$. Moreover,
\begin{align*} \int_{\Omega \setminus (C \cup B_{\delta}(x_0))} \frac{u(y)}{|x_0-y|^{N+2s}}\,dy  & \geq - \|u^-\|_{\infty} \int_{\Omega \setminus (C \cup B_{\delta}(x_0))} \frac{1}{|x_0-y|^{N+2s}}\,dy \\ & \geq - \|u^-\|_{\infty} \int_{\R^N \setminus B_{\delta}(x_0)} \frac{1}{|x_0-y|^{N+2s}}\,dy \\ & = - \|u^-\|_{\infty} (N\omega_N)\int_{\delta}^{+\infty} \frac{1}{z^{1+2s}}\,dz > -\infty,\end{align*}
where $\omega_N$ is the volume of the $N$-dimensional unit ball. Furthermore,
\[ \int_{C} \frac{u(y)}{|x_0-y|^{N+2s}}\,dy \geq \tilde{\gamma} \int_{C} \frac{1}{|x_0-y|^N}\,dy = \sum_{n=0}^{+\infty} \int_{B_n} \frac{1}{|x_0-y|^N}\,dy \geq \overline{c} \sum_{n =0}^{+\infty} \frac{(r_n)^N}{(r_n)^N} = +\infty.\]
Summing up, we obtain
\[ \int_{\Omega} \frac{u(y)}{|x_0-y|^{N+2s}}\,dy = + \infty.\]
The conclusion then follows by arguing as in Theorem \ref{prop:valueoftheintegral}.

$(ii) \Rightarrow (i)$. Suppose by contradiction that
\[ \limsup_{r \to 0^+} \left(\frac{1}{r^{2s}}\inf_{B_{r/2}(z_r)} u \right) = 0.\]
This entails the existence of a sequence $\{r_n\}_{n \in \N}$ of positive numbers, monotonically decreasing to $0$, and of points $\{x_n\}_{n \in \N}$ such that $x_n \in B_{r_n/2}(z_{r_n})$ for every $n \in \N$, and such that
\[ \frac{u(x_n)}{(r_n)^{2s}} \to 0\]
as $n \to +\infty$. By construction, the points $x_n$ also belong to $C_\beta(x_0)$ for $\beta = \frac{\pi}{6}$. Moreover, it holds
\[ |x_n-x_0| \geq |x_0-z_{r_n}|-|z_{r_n}-x_n| \geq \frac{r_n}{2},\]
so that
\[ \lim_{n \to +\infty} \frac{u(x_n)}{|x_n-x_0|^{2s}}= 0,\]
which implies
\[ \lim_{n \to +\infty} \frac{u(x_n)}{|x_n-x_0|^{s}}= 0,\]
a contradiction to the assumption
\[ \liminf_{\stackrel{x \to x_0}{x \in C_\beta(x_0)}} \frac{u(x)}{|x-x_0|^s} > 0.\]


\end{proof}
\begin{rem}We emphasize here that condition $(i)$ in Theorem \ref{thm:equiv-lim} implies the fast growth property used in \cite{dipierrosoavevaldinoci} to characterize the validity of Hopf's Lemma, i.e., 
	\[ \limsup_{r \to 0^+} \left(\frac{1}{r^{2s}}\inf_{B_{r/2}(z_r)} |u| \right) =+\infty.\] We only need to observe that the case \begin{equation} \label{eq:improvementvaldinoci} \limsup_{r \to 0^+} \left(\frac{1}{r^{2s}}\inf_{B_{r/2}(z_r)} |u| \right) \in (0,+\infty),\end{equation}
	cannot occur.	Indeed, if that were possible, on the one hand, reasoning as in the proof of the implication $(i) \Rightarrow (ii)$ in Theorem \ref{thm:equiv-lim} we would get the validity of the Hopf's Lemma. But, on the other hand, the same contradiction argument used for the opposite implication $(ii) \Rightarrow (i)$ implies that Hopf's Lemma does not hold true when the condition \eqref{eq:improvementvaldinoci} is satisfied. 
	Therefore, similarly to \cite[p. 222]{dipierrosoavevaldinoci}, one can state the following slightly improved dichotomy:
	 \begin{itemize}
	 	\item either $\displaystyle \limsup_{r \to 0^+} \left(\frac{1}{r^{2s}}\inf_{B_{r/2}(z_r)} |u| \right) = + \infty$, and $u$ satisfies Hopf's Lemma;
	 	\item or $\displaystyle \limsup_{r \to 0^+} \left(\frac{1}{r^{2s}}\inf_{B_{r/2}(z_r)} |u| \right) = 0$, and $u$ does not satisfy Hopf's Lemma.
	 \end{itemize}
\end{rem}
\subsection*{Acknowledgements} 

The authors would like to express their sincere gratitude to Serena Dipierro and Enrico Valdinoci for their invaluable suggestions and insightful conversations during their visit to Sapienza, Università di Roma in July 2024. They also thank the anonymous referees for their valuable comments.

A. DlT. acknowledges financial support from the Spanish Ministry of Science and Innovation (MICINN), through the
IMAG-Maria de Maeztu Excellence Grant CEX2020-001105-M/AEI/10.13039/501100011033. She is also supported by the FEDER-MINECO Grants
PID2021- 122122NB-I00, PID2020-113596GB-I00 and PID2024-155314NB-I00; RED2022-134784-T, funded by MCIN/AEI/10.13039/501100011033
and by J. Andalucia (FQM-116); Fondi Ateneo - Sapienza Università di Roma; PRIN (Prot. 20227HX33Z); DFG - Projektnummer: 561401741; and INdAM-GNAMPA Project 2023, codice CUP E53C2200193000, INdAM-GNAMPA  Project 2024, codice
CUP E53C23001670001 and INdAM-GNAMPA  Project 2025, codice CUP E5324001950001.

E. P. acknowledges partial support from the ANR project STOIQUES (Shape and Topology Optimization: Impactful QUestions and Emerging Subjects), ANR-24-CE40-2216.

\end{document}